\newtheorem{theoremx}{Theorem}
\newtheorem{theorem}{Theorem}[section]
\newtheorem{proposition}[theorem]{Proposition}
\theoremstyle{definition}
\newtheorem{definition}[theorem]{Definition}
\newtheorem{example}[theorem]{Example}
\newtheorem{conjecture}[theorem]{Conjecture}
\newtheorem{remark}[theorem]{Remark}
\numberwithin{equation}{subsection}
\newcommand{\NN}{\mathbb{N}}
\newcommand{\CC}{\mathbb{C}}
\newcommand{\cM}{\mathcal{T}}
\newcommand{\cL}{\mathcal{L}}
\newcommand{\cJ}{\mathcal{J}}
\newcommand{\cT}{\mathcal{T}}
\newcommand{\pT}{\CC[x_1,\ldots,x_s]/\langle f,\cJ_n(f)\rangle}
\newcommand{\Der}{\operatorname{Der}}
\newcommand{\Jac}{\operatorname{Jac}}
\newcommand{\conv}{\CC\{x_1,\ldots,x_s\}}
\newcommand{\pol}{\CC[x_1,\ldots,x_s]}
\newcommand{\dw}{\deg_w}
\begin{document}
\newcommand{\tens}{\otimes}
\newcommand{\hhtest}[1]{\tau ( #1 )}
\renewcommand{\hom}[3]{\operatorname{Hom}_{#1} ( #2, #3 )}

\allowdisplaybreaks

\title[Non-existence of negative derivations]{Non-existence of negative derivations on the higher Nash blowup local algebra}

\author[W. Badilla-C\'espedes]{Wágner Badilla-Céspedes}
\address{Centro de Ciencias Matemáticas, UNAM, Campus Morelia, Morelia, Michoacán, México.}
\email{wagner@matmor.unam.mx }

\author[A. Castorena]{Abel Castorena}

\email{abel@matmor.unam.mx}

\author[D. Duarte]{Daniel Duarte}

\email{adduarte@matmor.unam.mx}
\author[L. Núñez-Betancourt]{Luis Núñez-Betancourt}
\address{Centro de Investigación en Matemáticas, Guanajuato, Gto., México.}
\email{luisnub@cimat.mx}

\subjclass[2020]{14B05, 32S05, 13N15.}
\keywords{Higher Nash blowup local algebras, higher-order Jacobian matrix, weighted homogeneous isolated hypersurface singularities}

\begin{abstract}
Let $f\in\pol$ be a weighted homogeneous polynomial having an isolated singularity and $\cT_n(f)$ be its higher Nash blowup local algebra. We show that $\cT_n(f)$ does not admit negative weighted derivations for $n\geq2$. This answers affirmatively a conjecture of Hussain-Ma-Yau-Zuo.
\end{abstract}

\maketitle

\section*{Introduction}

The  Jacobian matrix of order $n$ is a higher-order version of the classical Jacobian matrix. It was introduced as a tool for computing the higher Nash blowup of a hypersurface \cite{Duarte2017}. Higher-order Jacobian matrices were later rediscovered and further developed by several authors \cite{BJNB2019,BD2020,MR4229623}. Ever since, this matrix has seen a wide variety of applications in singularity theory \cite{Duarte2017,BJNB2019,BD2020,MR4229623,AD2021,Barajas2023,HMYZ2023,LDS2023,LeYasuda2025,FeiYe}.

Let $f\in\conv$, where $\conv$ denotes the ring of convergent power series at the origin. Let $\Jac_n(f)$ be the higher Jacobian matrix of $f$ and $\cJ_n(f)$ the ideal generated by the maximal minors of $\Jac_n(f)$ (see Definition \ref{Jacn}). Hussain-Ma-Yau-Zuo defined the higher Nash blowup local algebra as the quotient $\cM_n(f)=\conv/\langle f,\cJ_n(f) \rangle$ \cite{HMYZ2023}. For $n=1$, this is the classical Tjurina algebra. Moreover, the authors also introduced the  algebra of derivations $\cL_n(f)=\Der_{\CC}(\cM_n(f))$. For $n=1$, this is known as the Yau algebra of $f$. The Tjurina and Yau algebras are fundamental objects in the study of hypersurface singularities \cite{MR0674404,Y1991}.

Three conjectures were proposed regarding $\cT_n(f)$: invariance under contact equivalence, non-existence of negative weighted derivations on $\cT_n(f)$ whenever $f$ is weighted homogeneous, and bounds for the dimension of $\cL_n(f)$ \cite{HMYZ2023}. 


The conjecture regarding the invariance of $\cT_n(f)$ under contact equivalence was proved by L\^{e}-Yasuda \cite{LeYasuda2025}. In this paper we prove the conjecture on the non-existence of negative weighted derivations on $\cT_n(f)$. Let us discuss this conjecture in more detail.
 
It is known that $\pT$ is a graded algebra whenever $f$ is a weighted homogeneous polynomial \cite[Theorem 1.7]{BCCD}. In this case, $\cL_n(f)$ is also graded. There is a general conjecture on algebras of derivations of local Artinian graded algebras, known as Halperin conjecture, stating the non-existence of negative weight derivations over those algebras \cite{H1999}. Inspired by Halperin's question, Hussain-Ma-Yau-Zuo conjectured that there are no negative weighted derivations on $\pT$. The goal of this paper is to prove this conjecture.

\begin{theoremx}\label{theorem-main}
Let $f\in \CC[x_1,\ldots,x_s]$ be a weighted homogeneous polynomial with respect to $w\in\NN^s$ and degree $d$ that defines an isolated hypersurface singularity. Assume that $d\geq2w_1\geq\cdots\geq 2w_s>0$. Then $\pT$ does not admit negative weighted derivations for all $n\geq2$.
\end{theoremx}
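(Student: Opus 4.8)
\noindent\emph{Proof strategy.} The plan is to argue by contradiction: suppose $D$ is a nonzero homogeneous $\CC$-derivation of $\cT_n(f)$ of negative weight $\ell<0$, and extract a contradiction from the graded structure of $I:=\langle f,\cJ_n(f)\rangle$. First I would set up the bookkeeping. Writing $R=\CC[x_1,\ldots,x_s]$ graded by $w$, every derivation of $\cT_n(f)=R/I$ lifts to a derivation $D=\sum_{i=1}^s g_i\,\partial_{x_i}$ of $R$ with $D(I)\subseteq I$, and $D$ is homogeneous of weight $\ell$ exactly when each $g_i$ is homogeneous of weighted degree $w_i+\ell$. Since lifts differing by $I\cdot\Der_\CC(R)$ induce the same derivation, it suffices to prove $g_i\in I$ for all $i$. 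The hypothesis $\ell<0$ together with $w_1\geq\cdots\geq w_s>0$ immediately forces $g_i=0$ whenever $w_i<-\ell$, so only the variables of largest weight can move; this is precisely where the ordering of the weights enters.

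Next I would record the equations imposed by $D(I)\subseteq I$. Applying $D$ to the relation $f$ gives $\sum_i g_i\,\partial_{x_i}f\in I$, i.e. $\sum_i \overline{\partial_{x_i}f}\;\overline{g_i}=0$ in $\cT_n(f)$, a homogeneous identity living in degree $d+\ell<d$. Likewise, for every maximal minor $m$ of $\Jac_n(f)$ one obtains $\sum_i \overline{\partial_{x_i}m}\;\overline{g_i}=0$ in $\cT_n(f)$. I would then combine these with the Euler identity $\sum_i w_i x_i\,\partial_{x_i}f=d\,f$ and with its higher-order consequences obtained by repeated differentiation, which express the entries $\partial^\alpha f$ of $\Jac_n(f)$, and hence the minors $m$, in terms of lower-order data; this is what makes the constraint in degree $d+\ell<d$ genuinely restrictive rather than vacuous.

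The core of the argument would be an induction that peels off the variables in order of decreasing weight. Using the bound $d\geq 2w_1$ I would first pin down the minimal degree $\delta$ in which $I$ carries a nonzero element—a degree governed entirely by $\cJ_n(f)$—and show that the displayed identities, read in the lowest available degrees, force the top-weight components $\overline{g_i}$ to vanish, i.e. $g_i\in I$. Substituting these back reduces the set of ``active'' variables and lets the same mechanism run on the next weight level, eventually yielding $g_i\in I$ for all $i$, and hence $D=0$, the desired contradiction.

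The main obstacle is this third step: it requires precise control of the graded structure of $\cJ_n(f)$—the degrees of its minimal generators and, crucially, the classes $\overline{\partial_{x_i}m}$ of the derivatives of the minors modulo $I$. This is a genuinely combinatorial problem about the higher-order Jacobian matrix, and I expect to need an explicit homogeneous generating set of $\cJ_n(f)$ (in the spirit of the gradedness result \cite[Theorem 1.7]{BCCD}) to carry it out. The hypotheses $n\geq2$ and $d\geq 2w_1$ should be exactly what guarantees that $\delta$ is large enough, and that the second-order entries of $\Jac_n(f)$ survive with nonnegative degree, so that the degree inequality $d+\ell<d$ actually bites.
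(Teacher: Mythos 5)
Your opening moves coincide with the paper's: lift the derivation to $\sum_i g_i\partial_{x_i}$ on $\pol$ with each $g_i$ homogeneous of degree $w_i+\ell$, and observe that $\sum_i g_i f_i = \tilde{D}(f)$ lies in $I=\langle f,\cJ_n(f)\rangle$ in weighted degree $d+\ell<d$. But the two steps you leave open are the entire content of the theorem, and the routes you sketch for them are not the ones that work. To make the inequality $d+\ell<d$ bite, you need to know that $I$ contains no nonzero homogeneous element of degree $<d$, and you propose to get this from an explicit homogeneous generating set of $\cJ_n(f)$ — a combinatorial problem you rightly flag as hard and do not solve. The paper sidesteps it completely: for $n\geq 2$ (the case $s=n=2$ being checked by hand from the generators in Example \ref{j2}) one has the containment $\cJ_n(f)\subseteq\cJ_1(f)^3$ \cite[Proposition 2.20]{LeYasuda2025}, and then a short estimate using $d\geq 2w_1\geq\cdots\geq 2w_s>0$ shows that every generator $f_1^{\alpha_1}\cdots f_s^{\alpha_s}$ of $\cJ_1(f)^3$ (with $\sum_i\alpha_i=3$) has degree at least $3(d-w_1)=d+(2d-3w_1)\geq d+w_1\geq d$. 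This is Proposition \ref{leastdeg}; no description of the minors themselves is ever needed, and without it (or some substitute) your degree constraint is, as you yourself note, potentially vacuous.

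Even granting that first step, your endgame — an induction peeling off variables by decreasing weight, using the relations $\sum_i g_i\partial_{x_i}m\in I$ for the minors $m$ together with Euler-type identities — is unsubstantiated, and it is not how the proof closes; in fact the relations coming from the minors are never used at all. Once $I$ is known to live in degrees $\geq d$, the membership $\sum_i g_i f_i\in I$ upgrades to the exact equation $\sum_i g_i f_i=0$. Here the isolated-singularity hypothesis enters: $f_1,\ldots,f_s$ is a regular sequence, so every syzygy of it is a combination of the Koszul syzygies $f_ie_j-f_je_i$. Hence the lift itself lies in the module $\left\langle f_i\partial_{x_j}-f_j\partial_{x_i}\right\rangle_{i\neq j}$, whose generators have weight $d-w_i-w_j\geq 2w_1-w_i-w_j\geq 0$, contradicting $\ell<0$ directly — with no induction on the variables, no control of the classes $\overline{\partial_{x_i}m}$, and no Euler identity. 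Concretely, the two ideas missing from your outline are (a) the containment $\cJ_n(f)\subseteq\cJ_1(f)^3$, and (b) the Koszul-syzygy argument for the regular sequence $f_1,\ldots,f_s$; as written, your plan cannot be completed without them.
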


\section*{Acknowledgements}
The first author is supported by UNAM Posdoctoral Program (POSDOC). The second author is supported by project IN100723, “Curvas, Sistemas lineales en superficies proyectivas y fibrados vectoriales” from DGAPA, UNAM. The third author is supported by SECIHTI project CF-2023-G-33 and PAPIIT grant IN117523. 
The fourth author was  supported by SECIHTI Grants CBF 2023-2024-224 and CF-2023-G-33.

\section{Higher-order Jacobian matrix of a weighted homogeneous polynomial}

\begin{definition}[{\cite{Duarte2017,BJNB2019,BD2020}}] \label{Jacn}
Let $f\in\conv$. Denote
$$\Jac_n(f):=\left(\frac{1}{(\alpha-\beta)!}\frac{\partial^{\alpha-\beta}(f)}{\partial x^{\alpha-\beta}}\right)_{\substack{\beta\in \{\beta\in\NN^s|0\leq|\beta|\leq n-1\} \\ \alpha\in \{\alpha\in\NN^s|1\leq|\alpha|\leq n\}}},$$
where we define $\displaystyle{\frac{1}{(\alpha-\beta)!}\frac{\partial^{\alpha-\beta}(f)}{\partial x^{\alpha-\beta}}=0},$ whenever $\alpha_i<\beta_i$ for some $i$ or if $\beta=\alpha$. It is a $M\times(N-1)$-matrix, where $M=\binom{n+s-1}{s}$ and $N=\binom{n+s}{s}$. We call $\Jac_n(f)$ the Jacobian matrix of order $n$ of $f$. 

Moreover, denote as $\cJ_n(f)$ the ideal generated by all maximal minors of $\Jac_n(f)$. Notice that $\Jac_1(f)$ is the usual Jacobian matrix and $\cJ_1(f)$ is the usual Jacobian ideal of $f$.
\end{definition}

\begin{example}\label{j2}
Let $f\in\CC[x_1,x_2]$. Denote as $f_i=\frac{\partial f}{\partial x_i}$ and $f_{ij}=\frac{\partial^2 f}{\partial x_ix_j}$. Then 
\[\Jac_2(f)=
\left( 
\begin{array}{ccccc}
             f_1& f_2 & \frac{1}{2}f_{11} & f_{12} & \frac{1}{2}f_{22} \\
             0 & 0 & f_1 & f_2 & 0 \\
             0 & 0 & 0 & f_1 & f_2\\
\end{array} 
\right).\] 
Moreover,
$$\cJ_2(f)=\langle f_1^3,f_1^2f_2,f_1f_2^2,f_2^3,\frac{1}{2}f_1^2f_{22}-f_1f_2f_{12}+\frac{1}{2}f_2^2f_{11}\rangle\subset\CC[x_1,x_2].$$
\end{example}

Recall that a polynomial $f=\sum_{\alpha}c_{\alpha}x^{\alpha}\in\pol$ is called weighted homogeneous of weighted degree $d$ with respect to $w\in\NN_{\geq1}^s$, if $\alpha\cdot w=d$ for all $c_{\alpha}\neq0$. We denote the weighted degree of $f$ as $\dw(f)$. 


\begin{theorem}[{\cite[Theorem 1.7]{BCCD}}]\label{theorem:Tjurina-graded}
Let $f$ be a weighted homogeneous polynomial in $\CC[x_1,\ldots,x_s]$. Then $\cJ_n(f)$ is a weighted homogeneous ideal for every $n \in \NN$.   
\end{theorem}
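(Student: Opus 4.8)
The plan is to show that every maximal minor of $\Jac_n(f)$ is a weighted homogeneous polynomial; since these minors generate $\cJ_n(f)$ by definition, this immediately yields that $\cJ_n(f)$ is a weighted homogeneous ideal, for every $n\in\NN$.

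First I would record the weighted degree of each entry of the matrix. Differentiating a weighted homogeneous polynomial of weighted degree $d$ with respect to $x_i$ lowers its weighted degree by $w_i$, since each monomial $x^\alpha$ with $\alpha\cdot w=d$ is sent to a multiple of $x^{\alpha-e_i}$ with $(\alpha-e_i)\cdot w=d-w_i$. Iterating, $\frac{\partial^{\gamma}f}{\partial x^{\gamma}}$ is either zero or weighted homogeneous of weighted degree $d-\gamma\cdot w$. Applying this with $\gamma=\alpha-\beta$, the entry of $\Jac_n(f)$ in row $\beta$ and column $\alpha$, namely $\frac{1}{(\alpha-\beta)!}\frac{\partial^{\alpha-\beta}f}{\partial x^{\alpha-\beta}}$, is either zero or weighted homogeneous of weighted degree $d-(\alpha-\beta)\cdot w=d-\alpha\cdot w+\beta\cdot w$.

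The key observation is that this places $\Jac_n(f)$ into the class of graded matrices. Assigning to each row the integer $u_\beta=-\beta\cdot w$ and to each column the integer $v_\alpha=d-\alpha\cdot w$, every nonzero entry in position $(\beta,\alpha)$ is weighted homogeneous of weighted degree $v_\alpha-u_\beta$. Now fix a maximal minor: it is the determinant of the square submatrix whose rows are all the indices $\beta$ and whose columns range over some fixed subset $S$ of the indices $\alpha$. Expanding by the Leibniz formula, each nonzero term is a product of one entry from each row $\beta$, matched to a distinct column of $S$ through a bijection $\sigma$. The weighted degree of such a term is $\sum_{\beta}(v_{\sigma(\beta)}-u_\beta)=\sum_{\alpha\in S}v_\alpha-\sum_{\beta}u_\beta$, where the last equality holds precisely because $\sigma$ is a bijection onto $S$. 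This value depends only on $S$ and not on the chosen term, so all nonzero terms share the common weighted degree $\sum_{\alpha\in S}(d-\alpha\cdot w)+\sum_{\beta}\beta\cdot w$. Hence the minor is weighted homogeneous (or zero).

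I expect the only delicate point to be the bookkeeping around the forced-zero entries, that is, those with $\alpha_i<\beta_i$ for some $i$ or with $\beta=\alpha$. These do not interfere with the argument: they simply contribute zero terms to the Leibniz expansion, and the homogeneity conclusion requires only that all nonzero terms share one weighted degree. Once each generating minor of $\cJ_n(f)$ is shown to be weighted homogeneous, it follows at once that $\cJ_n(f)$ is a weighted homogeneous ideal.
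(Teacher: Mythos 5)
Your proof is correct. The degree bookkeeping is right: a nonzero entry in position $(\beta,\alpha)$ of $\Jac_n(f)$ has weighted degree $d-(\alpha-\beta)\cdot w$, so the matrix is graded with respect to the row weights $-\beta\cdot w$ and column weights $d-\alpha\cdot w$, and the Leibniz expansion then forces every nonzero term of a maximal minor (all rows, a chosen column set $S$) to have the common degree $\sum_{\alpha\in S}(d-\alpha\cdot w)+\sum_{\beta}\beta\cdot w$; the forced-zero entries are indeed harmless. Note that the paper does not actually prove this statement --- it quotes it from \cite[Theorem 1.7]{BCCD} --- so you have supplied a complete, self-contained argument where the paper defers to a citation; your row/column grading argument is the standard way to establish such equivariance (equivalently, semi-invariance of the minors under the $\CC^*$-action $x_i\mapsto\lambda^{w_i}x_i$), and as a bonus it identifies the exact weighted degree of each minor, which is the kind of information exploited later in Proposition \ref{leastdeg}.
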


Hussain-Ma-Yau-Zuo used the ideal $\cJ_n(f)$ to introduce the following generalization of the Tjurina algebra of a hypersurface.

\begin{definition}[{\cite[Definition 1.3]{HMYZ2023}}] \label{Tjurina}
Let $f\in\conv$. Denote
$$\cT_n(f)=\conv/\langle f,\cJ_n(f) \rangle.$$
$\cT_n(f)$ is called the higher Nash blowup local algebra of $f$. Moreover, denote $\cL_n(f)=\Der_{\CC}(\cT_n(f))$, i.e., the Lie algebra of derivations of $\cT_n(f)$. Notice that $\cT_1(f)$ and $\cL_1(f)$ are the Tjurina algebra and the Yau algebra of $f$, respectively.
\end{definition}

\begin{remark}
There are two versions of the higher-order Jacobian matrix in the literature. The original one has $f$ in the $(\beta,\beta)$-entry of $\Jac_n(f)$ \cite{Duarte2017}, whereas the definition given in \cite{BJNB2019,BD2020} has 0 in the $(\beta,\beta)$-entry. Both versions coincide modulo $f$. In particular, the definition of $\cT_n(f)$ is the same regardless of the version that is adopted.
\end{remark}

In view of Theorem \ref{theorem:Tjurina-graded}, the algebra of derivations of $\pT$ is naturally graded. Hussain-Ma-Yau-Zuo proposed the following conjecture, which they verified for $s=n=2$ \cite[Theorem B]{HMYZ2023}. Later, the conjecture was verified for $s=2$ and $n\geq2$ \cite[Theorem 2.6]{BCCD}.

\begin{conjecture}[{\cite[Conjecture 1.7]{HMYZ2023}}]\label{conjecture}
Let $f\in\CC[x_1,\ldots,x_s]$ be a weighted homogeneous polynomial defining a hypersurface with  isolated singularity. Assume that $\dw(f)\geq 2w_1\geq2w_2\geq\cdots\geq2w_s>0$. Then $\pT$ has no negative weighted derivations, for $n\geq2$.
\end{conjecture}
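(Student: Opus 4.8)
The plan is to show that every homogeneous derivation of $\pT$ of negative weighted degree is zero. By Theorem~\ref{theorem:Tjurina-graded} the ideal $\langle f,\cJ_n(f)\rangle$ is weighted homogeneous, so $\cT_n(f)$ and its derivation algebra $\cL_n(f)$ are graded, and it suffices to treat a single homogeneous derivation $D$ of weighted degree $\ell<0$. Write $R=\CC[x_1,\dots,x_s]$, $I=\langle f,\cJ_n(f)\rangle$, and $f_i=\partial f/\partial x_i$. Lifting the images $D(\overline{x_i})$ to weighted homogeneous $g_i\in R$ of degree $w_i+\ell$, I obtain a derivation $\widetilde D=\sum_{i=1}^s g_i\,\partial/\partial x_i$ of $R$ lying over $D$; since $D$ is well defined on the quotient, $\widetilde D(I)\subseteq I$, and in particular $\widetilde D(f)=\sum_i g_i f_i\in I$ is weighted homogeneous of degree $d+\ell<d$. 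To prove $D=0$ it is enough to show $g_1=\dots=g_s=0$.

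The key step, and the place where the hypothesis $n\ge 2$ enters, is the containment
\[
\cJ_n(f)\subseteq \cJ_1(f)^2=\langle f_1,\dots,f_s\rangle^2 .
\]
A maximal minor of $\Jac_n(f)$ is an alternating sum of products $\prod_{\beta}\frac{1}{(\phi(\beta)-\beta)!}\partial^{\phi(\beta)-\beta}f$ over all rows $\beta$ with $|\beta|\le n-1$, where $\phi$ is an injection into the columns $\{1\le|\alpha|\le n\}$ with $\phi(\beta)\ge\beta$ componentwise. Every row $\beta$ with $|\beta|=n-1$ must satisfy $|\phi(\beta)|=n$, so $\phi(\beta)-\beta=e_i$ for some $i$ and the corresponding factor is the first-order partial $f_i\in\cJ_1(f)$. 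For $s\ge 2$ there are $\binom{n+s-2}{s-1}\ge 2$ such rows, and for $s=1$ the unique maximal minor equals $\pm f_1^{\,n}$ with $n\ge 2$; in either case each term carries at least two factors from $\cJ_1(f)$, so every maximal minor lies in $\cJ_1(f)^2$. Since each $f_i$ has weighted degree $d-w_i\ge d-w_1$, every nonzero element of $\cJ_n(f)$ then has weighted degree at least $2(d-w_1)\ge d$, the last inequality being exactly $d\ge 2w_1$. As $\langle f\rangle$ has no nonzero element of degree below $d$ either, the graded piece $I_{d+\ell}$ vanishes, and hence $\widetilde D(f)=\sum_i g_i f_i=0$ in $R$.

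Finally I would exploit the isolated singularity. Because $f$ is weighted homogeneous with an isolated singularity, the Euler relation $d\,f=\sum_i w_i x_i f_i$ shows $f\in\cJ_1(f)$ and the partials $f_1,\dots,f_s$ have the origin as their only common zero; thus $\cJ_1(f)$ is $\m$-primary and $f_1,\dots,f_s$ is a regular sequence in the Cohen--Macaulay ring $R$. Therefore every syzygy of $(f_1,\dots,f_s)$ is Koszul, so from $\sum_i g_i f_i=0$ I get $g_i=\sum_j h_{ij}f_j$ with $h_{ij}=-h_{ji}$ weighted homogeneous. Comparing degrees,
\[
\dw(h_{ij})=(w_i+\ell)-(d-w_j)=w_i+w_j+\ell-d\le 2w_1+\ell-d\le \ell<0,
\]
using $w_i+w_j\le 2w_1\le d$. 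A weighted homogeneous polynomial of negative degree vanishes, so every $h_{ij}=0$, whence $g_i=0$ for all $i$ and $D=0$.

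The main obstacle is the degree estimate in the second paragraph: one must control all maximal minors of $\Jac_n(f)$ at once and place them inside $\cJ_1(f)^2$. The argument sketched reduces this to the observation that the top rows of $\Jac_n(f)$ force first-order partials; the delicate points are verifying that this yields two factors lying in $\cJ_1(f)$ for every minor and every $s$, and handling the borderline case $d=2w_1$ where $2(d-w_1)=d$. It is precisely here that both $n\ge 2$ and the normalization $d\ge 2w_1\ge\cdots\ge 2w_s>0$ are indispensable, since the estimate fails for $n=1$, where the generators $f_i$ of $\cJ_1(f)$ already have weighted degree $d-w_i<d$.
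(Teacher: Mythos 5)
Your proof is correct, and its overall skeleton --- lift the negative derivation to $\widetilde{D}=\sum_i g_i\,\partial/\partial x_i$ on $\pol$, use a degree bound on $\langle f,\cJ_n(f)\rangle$ to force $\widetilde{D}(f)=0$, then invoke the Koszul syzygies of the regular sequence $f_1,\dots,f_s$ and a weight count to conclude --- is exactly that of the paper's proof of Theorem \ref{theorem-main} via Proposition \ref{leastdeg}. Where you genuinely diverge is in how the key degree estimate is established. The paper proves Proposition \ref{leastdeg} by quoting the containment $\cJ_n(f)\subseteq\cJ_1(f)^3$ of L\^{e}--Yasuda \cite{LeYasuda2025}, which is available only for $s\geq3$, $n\geq2$ or $s=2$, $n\geq3$, and must therefore treat the case $s=n=2$ separately by inspecting the explicit generators of Example \ref{j2}. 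You instead prove the weaker containment $\cJ_n(f)\subseteq\cJ_1(f)^2$ directly and uniformly: in the Leibniz expansion of any maximal minor, every row $\beta$ with $|\beta|=n-1$ must be matched to a column $\alpha$ with $\alpha-\beta=e_i$, contributing a factor $f_i$, and for $s\geq2$, $n\geq2$ there are $\binom{n+s-2}{s-1}\geq2$ such rows, so every term carries at least two first-order partials. This weaker containment suffices because the resulting bound $2(d-w_1)\geq d$ is exactly the hypothesis $d\geq 2w_1$, whereas the paper's cube containment yields the stronger (and unneeded) bound $\dw(g)\geq d+w_1$ in the cases it covers. What your route buys is self-containment and no case split; what the paper's route buys is brevity, at the cost of an external citation and the separate $s=n=2$ computation. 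Your final step --- writing $g_i=\sum_j h_{ij}f_j$ with $h_{ij}=-h_{ji}$ and noting $\dw(h_{ij})=w_i+w_j+\ell-d\leq\ell<0$ --- is the same Koszul argument as the paper's, merely phrased as vanishing of the coefficients rather than as the bound $\dw(\tilde{\delta})\geq\min_{i\neq j}\{d-w_i-w_j\}\geq0$.
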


\begin{remark}
Recall that $\CC[x_1,\ldots,x_s]/\langle f, \mathcal{J}_1(f)\rangle$ coincides with the classical Tjurina algebra. The analogous statement of the previous conjecture for the Tjurina algebra is a conjecture formulated by Yau \cite{CXY1995}. To the best of our knowledge, Yau's conjecture is still open, although many cases have already been resolved \cite{CXY1995,XY1996,CYZ2019,CCYZ2020}.
\end{remark}

\section{Proof of Theorem \ref{theorem-main}}

\begin{proposition}\label{leastdeg}
Let $f\in\pol$ be a weighted homogeneous polynomial with respect to $w\in\NN_{\geq1}^s$. Assume that $\dw(f)\geq 2w_1\geq2w_2\geq\cdots\geq2w_s>0$. Then
\begin{equation*}
\dw(f)\leq \dw(g),
\end{equation*}
where g denotes an arbitrary maximal minor of $\Jac_n(f)$, for $n\geq2$.
\end{proposition}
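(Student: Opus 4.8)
The plan is to make the weighted degree of a maximal minor completely explicit through its permutation expansion, and then to extract an extra factor of $d$ from the rows of $\Jac_n(f)$ indexed by multi-indices $\beta$ with $|\beta|=n-1$. Throughout I assume $g\neq 0$, which is the only case in which $\dw(g)$ is defined.

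\emph{Setup.} First I would record how the grading interacts with the entries of $\Jac_n(f)$. Since $f$ is weighted homogeneous of degree $d$, the $(\beta,\alpha)$-entry $\tfrac{1}{(\alpha-\beta)!}\partial^{\alpha-\beta}f$ is weighted homogeneous of degree $d-(\alpha-\beta)\cdot w$ whenever it is nonzero, and it vanishes unless $\alpha\geq\beta$ coordinatewise, $\alpha\neq\beta$, and $(\alpha-\beta)\cdot w\leq d$ (the last condition because a weighted-degree-$d$ polynomial has $\partial^\gamma f=0$ once $\gamma\cdot w>d$). Fix a maximal minor $g$, corresponding to a choice of $M=\binom{n+s-1}{s}$ columns $S\subseteq\{\alpha:1\leq|\alpha|\leq n\}$. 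Each monomial in the determinant of the corresponding submatrix is indexed by a bijection $\sigma$ from the rows $\{\beta:0\leq|\beta|\leq n-1\}$ onto $S$, and has weighted degree
\[
\sum_{\beta}\big(d-(\sigma(\beta)-\beta)\cdot w\big)=Md+\sum_{\beta}\beta\cdot w-\sum_{\alpha\in S}\alpha\cdot w,
\]
which is independent of $\sigma$. Hence $g$ is weighted homogeneous (as predicted by Theorem~\ref{theorem:Tjurina-graded}) and, since $g\neq 0$, its weighted degree equals this value.

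\emph{Reduction and key step.} With the formula above, the desired inequality $\dw(f)\leq\dw(g)$ is equivalent to
\[
\sum_{\beta}(\sigma(\beta)-\beta)\cdot w\leq (M-1)\,d
\]
for any bijection $\sigma$ contributing a nonzero monomial of $g$. I would split the rows into $R_1=\{\beta:|\beta|=n-1\}$ and $R_0=\{\beta:|\beta|\leq n-2\}$. For $\beta\in R_1$, the constraints $|\sigma(\beta)|\leq n$ together with $\sigma(\beta)\geq\beta$ coordinatewise and $\sigma(\beta)\neq\beta$ force $\sigma(\beta)=\beta+e_i$ for some $i$, so that $(\sigma(\beta)-\beta)\cdot w=w_i\leq w_1$; for $\beta\in R_0$ I only use the homogeneity bound $(\sigma(\beta)-\beta)\cdot w\leq d$. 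Since $M=|R_0|+|R_1|$, this gives
\[
\sum_{\beta}(\sigma(\beta)-\beta)\cdot w\leq |R_0|\,d+|R_1|\,w_1,
\]
and the reduction is settled once $|R_1|\,w_1\leq(|R_1|-1)\,d$. Because $d\geq 2w_1$, this holds as soon as $|R_1|=\binom{n+s-2}{s-1}\geq 2$.

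Finally I would dispatch the count: $\binom{n+s-2}{s-1}\geq 2$ for every $n\geq 2$ and $s\geq 2$, while the remaining case $s=1$ is immediate, since there $\Jac_n(f)$ is triangular with single maximal minor $(f')^n$, of weighted degree $n(d-w_1)\geq d$ for $n\geq 2$. The one genuinely nonroutine point is the key step: recognizing that the $|\beta|=n-1$ rows are forced to pair with first-order derivatives and hence contribute only $w_i\leq w_1\leq d/2$ instead of the generic $d$. This is precisely the savings that upgrades the trivial bound $\dw(g)\geq 0$ to $\dw(g)\geq d$; everything else is bookkeeping with multinomial coefficients.
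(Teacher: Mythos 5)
Your proof is correct, and it takes a genuinely different route from the paper's. The paper never expands the determinant: it checks the case $s=n=2$ by hand from the explicit generators in Example \ref{j2}, and in all remaining cases invokes the containment $\cJ_n(f)\subseteq\cJ_1(f)^3$ of L\^{e}--Yasuda \cite[Proposition 2.20]{LeYasuda2025}, after which the statement reduces to the one-line estimate $\dw(f_1^{\alpha_1}\cdots f_s^{\alpha_s})=\sum_i\alpha_i(d-w_i)\geq 3(d-w_1)\geq d+w_1$ for $\alpha_1+\cdots+\alpha_s=3$. You instead work intrinsically with the permutation expansion of a maximal minor: all of its nonzero terms share the weighted degree $Md+\sum_\beta\beta\cdot w-\sum_{\alpha\in S}\alpha\cdot w$, the rows with $|\beta|=n-1$ are forced to pair with columns of the form $\beta+e_i$ and hence cost only $w_i\leq w_1\leq d/2$ apiece, and there are $\binom{n+s-2}{s-1}\geq 2$ such rows, giving $\dw(g)\geq\binom{n+s-2}{s-1}(d-w_1)\geq d$. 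Your approach buys self-containedness (no appeal to the L\^{e}--Yasuda containment, which is itself a nontrivial result), uniformity (no case split between $s=n=2$ and the rest, and it even covers the degenerate case $s=1$, which falls outside the hypotheses of the cited containment), and it re-derives the gradedness of the minors (Theorem \ref{theorem:Tjurina-graded}) in passing; moreover your bound is sharp, since for $f=x_1^2+x_2^2$ and $n=2$ the last generator in Example \ref{j2} has weighted degree exactly $d$. What the paper's route buys is brevity: by outsourcing the combinatorial structure of the minors to \cite{LeYasuda2025}, its entire argument is a short degree count, at the price of a slightly weaker scope and a case-by-case structure.
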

\begin{proof}
This can be checked directly for $s=n=2$ using the generators from Example \ref{j2}. Hence we can assume $s\geq3$ and $n\geq2$, or $s=2$ and $n\geq3$. In both cases, it is known that $\cJ_n(f)\subseteq\cJ_1(f)^3$ \cite[Proposition 2.20]{LeYasuda2025}. Let $d=\deg_w(f)$.

Denote $f_i=\frac{\partial f}{\partial x_i}$. Let $h=f_1^{\alpha_1}\cdots f_s^{\alpha_s}$ be a generator of $\cJ_1(f)^3$, where $\alpha_1+\cdots+\alpha_s=3$. Using the assumption $d\geq 2w_1\geq\cdots\geq2w_s>0$ we obtain
\begin{align*}
\deg_w(h)&=\alpha_1(d-w_1)+\cdots+\alpha_s(d-w_s)\\
&\geq \alpha_1(d-w_1)+\cdots+\alpha_s(d-w_1)\\
&=(\alpha_1+\cdots+\alpha_s)(d-w_1)\\
&=3(d-w_1)=d+2d-3w_1\\
&\geq d+4w_1-3w_1=d+w_1\geq d.
\end{align*}
Since $\cJ_1(f)^3$ is weighted homogeneous, the previous computation shows that any non-zero weighted element of $\cJ_1(f)^3$ has weighted degree greater or equal than $d$. In particular, this is true for the generators of $\cJ_n(f)$.
\end{proof}


\begin{proof}[Proof of Theorem \ref{theorem-main}]
Since $f$ has an isolated singularity and is weighted homogeneous, the ideal generated by the partial derivatives $f_i=\frac{\partial f}{\partial x_i}$ is zero-dimensional. Then
$f_1,\ldots,f_s$ form a weighted homogeneous regular sequence of degrees $d-w_1,\ldots,d-w_s$, respectively. 

Let $\delta\in \Der_\CC (\pT)$ be a weighted homogeneous derivation of negative degree. Then $\delta$ has a weighted homogeneous lifting
$\tilde{\delta}\in \Der_\CC(\pol)$ of the same degree such that $\tilde{\delta}(\langle f,\cJ_n(f) \rangle)\subseteq \langle f,\cJ_n(f) \rangle$. Since $\tilde{\delta}$ has negative weighted degree, Proposition \ref{leastdeg} implies $\tilde{\delta}(f)=0.$
Let $r_1,\ldots,r_s\in \pol$ be such that
$\tilde{\delta}=r_1\frac{\partial}{\partial x_1}+\ldots+r_s\frac{\partial}{\partial x_s}$. Then, 
$$0=\tilde{\delta}(f)=r_1 f_1+\ldots+r_s f_s.$$
Since $f_1,\ldots,f_s$ is a regular sequence, we have that 
$$
\begin{pmatrix}
r_1\\
r_2\\
\vdots\\
r_s
\end{pmatrix}\in \langle f_ie_j-f_je_i\rangle_{i\neq j},
$$
where $e_i$ is the $i$-th vector in the canonical basis of $(\pol)^s$.
Hence, 
$$
\tilde{\delta}=r_1\frac{\partial}{\partial x_1}+\dots+r_s\frac{\partial}{\partial x_s}\in
\left\langle 
f_i \frac{\partial}{\partial x_j}-f_j\frac{\partial}{\partial x_i}\right\rangle_{i\neq j}.
$$
Then,
\begin{align*}
\dw(\delta)=\dw(\tilde{\delta})\geq &\min\left\{\dw \left( f_i \frac{\partial}{\partial x_j}-f_j\frac{\partial}{\partial x_i}\right)\right\}_{i\neq j}\\
=&\min\left\{ \dw(f)-w_i-w_j\right\}_{i\neq j}\geq 0.    
\end{align*}
This contradicts the fact that $\delta$ has negative degree.
\end{proof}

\begin{remark}
The assumption $\dw(f)\geq 2w_1\geq2w_2\geq\cdots\geq2w_s>0$ holds, up to a change of coordinates, whenever $f\in\langle x_1,\ldots,x_s\rangle^3$ \cite[Theorem 2.1]{MR3490075}, \cite{Saito1971}.
\end{remark}

\begin{remark}
Let $f\in \CC[x_1,\ldots,x_s]$ be a weighted homogeneous polynomial with respect to $w\in\NN^s$ and degree $d$ that defines an isolated hypersurface singularity. Consider any set $S$ of weighted homogeneous polynomials satisfying $d\leq \deg_w g$, for all $g\in S$. Then, proceeding exactly as in the proof of Theorem \ref{theorem-main}, it follows that $\CC[x_1,\ldots,x_s]/\langle f, S\rangle$ does not admit negative weighted derivations.
\end{remark}

\bibliographystyle{alpha}
\bibliography{References}

\end{document}